\newtheorem{thm}{Theorem}[section]
\newtheorem{prop}{Proposition}[section]
\newtheorem{lem}{Lemma}[section]
\theoremstyle{definition} \newtheorem{example}{Example}[section]
\newtheorem{definition}{Definition}[section]
\newcommand{\lspan}{\operatorname{span}} \newcommand{\cH}{\mathcal{H}}
 \newcommand{\cU}{\mathcal{U}}
\newcommand{\cP}{\mathcal{P}} \newcommand{\Nset}{\mathbb{N}}
 \newcommand{\Rset}{\mathbb{R}}
 \newcommand{\Wr}{\operatorname{Wr}}
\newcommand{\ord}{\operatorname{ord}}
\begin{document}

\title{Corrigendum on the proof of completeness for exceptional Hermite polynomials}
\author{David G\'omez-Ullate}
\address{Escuela Superior de Ingenier\'ia, Universidad de C\'adiz, 11519 Puerto Real, Spain.}
\address{Departamento de F\'isica Te\'orica, Universidad Complutense de
  Madrid, 28040 Madrid, Spain.}

\author{Yves Grandati}
\address{ Laboratoire de Physique et Chimie Th\'eoriques, Universit\'e de Lorraine, 57078 Metz, Cedex 3, France.}
\author{Robert Milson}
\address{Department of Mathematics and Statistics, Dalhousie University,
  Halifax, NS, B3H 3J5, Canada.}
\email{david.gomez-ullate@uca.es, grandati@univ-metz.fr,   rmilson@dal.ca}
\begin{abstract}
Exceptional orthogonal polynomials are complete families of orthogonal polynomials that arise as eigenfunctions of a Sturm-Liouville problem. Antonio Dur\'an discovered a gap in the original proof of completeness for exceptional Hermite polynomials, that has propagated to analogous results for other exceptional families. In this paper we provide an alternative proof that follows essentially the same arguments, but provides a direct proof of the key lemma on which the completeness proof is based. This direct proof makes use of the theory of trivial monodromy potentials developed by Duistermaat and Gr\"unbaum and Oblomkov.

\bigskip
\noindent \textbf{Keywords.} Exceptional Hermite polynomials, trivial monodromy potentials, completeness.
\end{abstract}
\maketitle

\section{Introduction and definitions}

It was recently brought to our attention by Antonio Dur\'an that there is a gap in the proof of completeness for exceptional Hermite polynomials, and by extension to other polynomial families, \cite{Duran2019proof}. This is a central result on the construction of exceptional polynomials, and thus deserves immediate attention. The first papers on exceptional polynomials dealt with particular families and used a different approach to establish completeness, but the first paper to deal with the general case of exceptional Hermite polynomials indexed by partitions \cite{gomez2013rational} contains an invalid argument to support this statement (Proposition 5.8). More specifically, Proposition 5.4 in \cite{gomez2013rational} is incorrect as currently stated. A corollary of this proposition was used in  \cite{gomez2013rational} to establish the completeness of exceptional Hermite polynomials, so the whole proof must be revised.

Many of the subsequent works on the theory of exceptional polynomials rely on these propositions to establish analogous results for the completeness of other exceptional families. These are mostly the works by Dur\'an \cite{Duran2014,Duran2014a,Duran2015Jacobi,Duran2015Admi} and the more recent works by Bonneux and Kuijlaars \cite{Bonneux2018,Bonneux2019} that rely in turn on Duran's results. Some of our own papers also contain incorrect versions of Proposition 5.4 in \cite{gomez2013rational}: e.g. corollaries 5.29 and 5.30 in \cite{garcia2016bochner}, Proposition 5.3 in \cite{gomez2016} and similar claims in \cite{gomez2018durfee}. 
The problem in the argument by dimension exhaustion lies in asserting the linear independence of the constructed linear functionals at each of the zeros of $H_\lambda$, i.e. the poles of the exceptional operator. It is not hard to build a counterxample where the linear functionals $\alpha_{i,j}$ are not independent in the case where the poles have multiplicity higher than one, \cite{Duran2019proof}.
The essence of the argument can be mantained, but the explicit expressions of the differential constraints at each of the poles given in \cite{gomez2013rational,garcia2016bochner,gomez2016} cannot be employed. Rather than that, the differential constraints that characterize the exceptional subspace must be retrieved from  the Laurent expansion around such poles. Thus, in this paper we provide a direct, alternative derivation of the necessary result to prove completeness that bypasses the problematic Proposition 5.4 in \cite{gomez2013rational}. 
This result makes use of theory of trivial monodromy potentials developed by Duistermaat and Gr\"unbaum \cite{refDG86} and extended to potentials with quadratic growth at infinity by Oblomkov \cite{oblomkov1999monodromy}. We would like to stress that the main result that allows the completeness proof to hold, which is Proposition \ref{prop:etasquared} in Section~ \ref{sec:2}, is an algebraic statement independent of the $\mathrm{L}^2$ theory, and thus applies to all partitions $\lambda$, and not only to the subset of even partitions, those for which the exceptional Hermite polynomials are complete.

As mentioned above, an  independent  proof of Proposition~\ref{prop:etasquared} has been given by Dur\'an in \cite{Duran2019proof}, using the duality between exceptional Hermite and Charlier polynomials.

%
%

\subsection{Preliminaries}

We begin by recalling some necessary definitions.  A partition
$\lambda_i\; i\in \Nset$ is a non-increasing sequence of integers with
\[ \lambda_1 \geq \lambda_2 \geq \cdots \geq \lambda_\ell >0, \] and
$\lambda_i=0$ for all $i>\ell$.  Every  partition determines a
strictly decreasing sequence of integers
\[ k_i = \lambda_i + \ell-i,\quad i=1,2,\ldots \] with $k_\ell>0$,and
$k_{\ell+j} = -j,\; j=1,2,\ldots$ .  Set
\[ K_\lambda =  \{ k_1,\ldots, k_\ell \},\]
so that
$\lambda \mapsto K_\lambda$ describes a bijection
between the set of partitions and the set of finite subsets of
$\Nset$. An \textit{even partition} is a partition of length $\ell=2m$ that satisfies
\begin{equation}\label{eq:double}
 \lambda_{2i-1}=\lambda_{2i},\qquad i=1,\dots,m.
\end{equation}

Let
\[ H_n(x) = (-1)^n e^{x^2}\left( \frac{d}{dx}\right)^n e^{-x^2},\quad
  n\in \Nset_0 \] be the classical Hermite polynomials.  For a given
partition $\lambda$, let
\begin{align}
  H_\lambda &= \Wr[H_{k_\ell},\ldots, H_{k_1}], \label{eq:Hlam}\\
  H_{\lambda,i} &= \Wr[H_{k_\ell},\ldots, H_{k_1},H_i],\quad i\in \Nset_0\setminus K_\lambda \label{eq:Hlami}
\end{align}
  be Wronskians of Hermite polynomials and let
  \begin{equation}\label{eq:cU}
  \cU_\lambda = \lspan \{ H_{\lambda,i} \colon i\in \Nset_0\setminus K_\lambda \} 
\end{equation}
be the  polynomial subspace spanned by $H_{\lambda,i}$. When $\lambda$ is an even partition, $H_{\lambda,i}$ are precisely the exceptional Hermite polynomials associated to $\lambda$. We recall that $H_\lambda$ has no real zeros if and only if $\lambda$ is an even partition. For the number of real zeros that $H_\lambda$ has for other partitions, see \cite{Garcia-Ferrero2015}.

For linearly independent polynomials $p_0,\ldots, p_{n-1}$ we have
\[ \deg \Wr[p_0,p_1,\ldots, p_{n-1}] = \sum_{i=0}^{n-1} (\deg
  p_i-i).\] From this it follows that
\begin{align*}
  \deg H_\lambda &= |\lambda| = \sum_i \lambda_i,\\
  \deg H_{\lambda,i} &= |\lambda| +i- \ell.
\end{align*}
Let $I_\lambda = \{ \deg y \colon y\in \cU_\lambda\setminus\{ 0\}\}$ be
the degree sequence of $\cU_\lambda$.  By inspection,
\begin{align*}
  I_\lambda
  &= \{ \deg H_{\lambda,i} : i\notin K_\lambda \} \\
  &= \{ |\lambda|+i-\ell : i\in \Nset_0\setminus K_\lambda \} \\
  &=
    \{  n\in \Nset_0 \colon n\geq |\lambda|-\ell,\;
    n+\ell-|\lambda| \notin K_\lambda    \}
\end{align*}
Hence,
\[ \Nset_0\setminus I_\lambda =  \{ n\in \Nset_0: n< |\lambda|-\ell \}
  \cup \{ k_1+|\lambda|-\ell,\ldots, k_\ell + |\lambda|-\ell \}.
\]
It follows that the subspace $\cU_\lambda$ has codimension $|\lambda|$ in the set of all univariate polynomials $\mathbb{C}[x]$.

For any meromorphic function $f$, we define $\ord_\xi f$ to be the order of $\xi$ as a pole or root of $f$, i.e. $\ord_{\xi}f=1$ if $f$ has a simple root at $\xi$ and $\ord_{\xi}f=-2$ if $\xi$ is a double pole of $f$.

\section{Completeness of exceptional Hermite polynomials}\label{sec:2}

In this section we will first characterize the exceptional subspace $\cU_\lambda$ defined in \eqref{eq:cU} as the set of polynomials that satisfy a number of differential constraints at the zeros of $H_\lambda$. This is the part that  fixes Proposition 5.4 in \cite{gomez2013rational}. It is noteworthy that Proposition 5.4 in \cite{gomez2013rational} would hold if all roots of $H_\lambda$ are simple\footnote{The root of $H_\lambda$ at zero plays a special role due to symmetry and generally has higher multiplicity, but in the case of even partitions (the only ones that matters for the $\mathrm{L}^2$ theory), $H_\lambda$ has no real roots.}, which is precisely known as Veselov's conjecture \cite{Felder2012a}. In order to have a characterization of $\cU_\lambda$ that does not rely on any assumption on the multiplicity of the roots of $H_\lambda$, we provide Propositions~\ref{prop:Udef}  and \ref{prop:etasquared} below. We need two auxiliary Lemmas and Proposition~\ref{prop:etasquared} to prove Theorem~\ref{prop:density} on the completeness of exceptional Hermite polynomials.

\subsection{Characterization of the exceptional subspace}

The direct proof of Proposition~\ref{prop:etasquared} relies on the following results of Duistermaat-Gr\"unbaum \cite{refDG86} and Oblomkov \cite{oblomkov1999monodromy}, that we recall now without proof.

\begin{definition}\label{def:ratext}
  A Schr\"odinger operator $L=-D_{xx}+U(x)$ has trivial monodromy at $\xi\in\mathbb C$ if the general solution of the equation
  \[L[\psi]=-\psi''+U\psi = \lambda\psi\]
  is meromorphic in a neighbourhood of $\xi$ for all values of $\lambda\in\mathbb C$.
  If $L$ has trivial monodromy at every point $\xi\in\mathbb C$ we say that $L$ is monodromy-free.
\end{definition}

\begin{prop}[Duistermaat-Gr\"unbaum]
  \label{prop:dg}
  Let $U(x)$ be meromorphic in a neighbourhood of $x=\xi$ with Laurent
  expansion
  \[ U(x) = \sum_{j\geq -2} c_j (x-\xi)^j,\quad c_{-2}\neq 0.\]

  Then
  the Schr\"odinger operator $L=-D_{xx} + U(x)$ has trivial monodromy
  at $x=\xi$ if and only if there exists an integer $\nu \geq 1$ such
  that
  \begin{equation}
    \label{eq:ccond}
    c_{-2} = \nu (\nu+1),\qquad c_{2j-1} = 0,\quad 0\leq j \leq
    \nu. 
  \end{equation}

  Moreover, if the monodromy is trivial at $x=\xi$, then
  the formal eigenfunctions  
  \[L[\psi]= -\psi_{xx}+U(x)\psi(x) = E \psi(x) \]
  have  the following Laurent expansion
  \begin{equation}
    \label{eq:psilaurent}
     \psi(x) = \sum_{j=0}^\infty  a_k (x-\xi)^{j-\nu},
  \end{equation}
  with
  \begin{equation}
    \label{eq:acond}
     a_1=a_3=\ldots =a_{2\nu-1} = 0.
  \end{equation}
\end{prop}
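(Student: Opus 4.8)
\emph{Proof strategy.} The plan is to reduce the whole statement to a Frobenius analysis at the pole; after translating so that $\xi=0$, substituting $\psi\sim x^{\rho}$ into $-\psi''+U\psi=E\psi$ gives the indicial equation $\rho(\rho-1)=c_{-2}$, with roots $\rho_{\pm}=\tfrac12(1\pm\sqrt{1+4c_{-2}})$ summing to $1$. In both directions one then has $c_{-2}=\nu(\nu+1)$ for an integer $\nu\geq1$, with exponents $-\nu$ and $\nu+1$: in the forward direction because meromorphy forces $\rho_{\pm}\in\Zset$, so that $\sqrt{1+4c_{-2}}=\rho_+-\rho_-$ is a positive odd integer $2\nu+1$ (using $c_{-2}\neq0$); and in the converse because this is precisely the hypothesis. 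Next I would write $\psi(x)=\sum_{j\geq0}a_jx^{j-\nu}$ and substitute, reading off for $k\geq1$
\[
 k(2\nu+1-k)\,a_k=E\,a_{k-2}-\sum_{i=1}^{k}c_{i-2}\,a_{k-i},\qquad a_{-1}=a_{-2}=0,
\]
with $a_0$ free. For $1\leq k\leq2\nu$ the factor $k(2\nu+1-k)$ is a positive integer, so $a_k$ is determined and is a polynomial in $E$; at $k=2\nu+1$ this factor vanishes and the single obstruction to a logarithm-free second solution is the compatibility relation
\[
 \Omega(E):=E\,a_{2\nu-1}-\sum_{i=1}^{2\nu+1}c_{i-2}\,a_{2\nu+1-i}=0 .
\]
By the standard Frobenius theory at a regular singular point whose exponents differ by the positive integer $2\nu+1$, and since the larger-exponent solution $\sim x^{\nu+1}$ is always meromorphic, $L$ has trivial monodromy at $0$ if and only if $\Omega(E)\equiv0$ identically in $E$. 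The statement thus reduces to: $\Omega(E)\equiv0$ if and only if $c_{2j-1}=0$ for $0\leq j\leq\nu$.

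For the ``if'' direction, and to obtain \eqref{eq:psilaurent}--\eqref{eq:acond}, I would assume $c_{-1}=c_1=\cdots=c_{2\nu-1}=0$ and run a parity induction on odd $k$ with $1\leq k\leq2\nu-1$: in the recursion $a_{k-2}=0$ by induction, while in $\sum_{i=1}^{k}c_{i-2}a_{k-i}$ every summand vanishes, since odd $i$ kills $c_{i-2}$ and even $i$ makes $k-i$ an odd index below $k$. Hence $a_1=a_3=\cdots=a_{2\nu-1}=0$, so $\Omega(E)=0$ and $L$ is monodromy-free at $0$; the same computation, applied to an arbitrary eigenfunction (a combination of the two Frobenius solutions), yields the Laurent form \eqref{eq:psilaurent} together with the vanishing \eqref{eq:acond}.

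For the ``only if'' direction I would use that $\Omega(E)$ must vanish \emph{identically} in $E$, via a degree count in the recursion. One checks $\deg_E a_k\leq\lfloor k/2\rfloor$, and that in any monomial of $a_k$ the total ``index drop'' of the recursion steps used ($2$ for each $E$-step, $m+2$ for each $c_m$-step) equals $k$. With this bookkeeping I would prove, by induction on $j=0,1,\dots,\nu$, that if $c_{-1}=\cdots=c_{2j-3}=0$ then the coefficient of $E^{\nu-j}$ in $\Omega(E)$ equals $\theta_j\,c_{2j-1}$ for some $\theta_j\neq0$: once the lower odd coefficients are gone, the residual odd index drop $2j+1$ needed to reach that power of $E$ can only be realised by a single use of the $c_{2j-1}$-step, and the two resulting contributions, one from $E\,a_{2\nu-1}$ and one from the summand $c_{2j-1}\,a_{2(\nu-j)}$, carry opposite signs because every recursion denominator $k(2\nu+1-k)$ with $1\leq k\leq2\nu$ is positive; hence they cannot cancel. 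Feeding $\Omega(E)\equiv0$ into this statement successively for $j=0,1,\dots,\nu$ forces $c_{-1}=0$, then $c_1=0$, \dots, and finally $c_{2\nu-1}=0$, which together with $c_{-2}=\nu(\nu+1)$ is exactly \eqref{eq:ccond}.

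I expect this last paragraph to be the main obstacle. The $E$-degree bookkeeping in the recursion has to be organised with some care, and the decisive point is the non-cancellation of the two contributions to the coefficient of $E^{\nu-j}$ in $\Omega$; this is where one genuinely uses that the recursion denominators $k(2\nu+1-k)$ keep a constant sign for $1\leq k\leq2\nu$, equivalently that the resonance sits exactly at $k=2\nu+1$.
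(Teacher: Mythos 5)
First, a point of comparison: the paper does not prove this proposition at all --- it is explicitly recalled ``without proof'' from Duistermaat--Gr\"unbaum \cite{refDG86}, so there is no in-paper argument to measure yours against; what you propose is essentially the classical Frobenius-type proof from the literature. Your reduction is sound: after translating $\xi$ to $0$ the singularity is regular, meromorphy of all solutions forces integer exponents, $c_{-2}\neq 0$ together with $\rho_++\rho_-=1$ gives exponents $-\nu$, $\nu+1$ and $c_{-2}=\nu(\nu+1)$, and trivial monodromy for \emph{every} $E$ is equivalent to the logarithm obstruction $\Omega(E)$ vanishing identically. Your parity induction correctly gives the ``if'' direction as well as the supplementary statement \eqref{eq:psilaurent}--\eqref{eq:acond}.

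The one genuine flaw, in the step you yourself flag as the main obstacle, is the sign reasoning in the ``only if'' direction. Your index-drop bookkeeping does show that, once $c_{-1}=\cdots=c_{2j-3}=0$, the coefficient of $E^{\nu-j}$ in $\Omega$ is a pure number $\theta_j$ times $c_{2j-1}$; but the assertion that the two contributions ``carry opposite signs, hence cannot cancel'' is logically inverted --- opposite signs is exactly what would permit cancellation. The correct statement is that \emph{all} contributions have the same sign. Indeed, every chain of recursion steps producing the monomial $E^{\nu-j}c_{2j-1}$ consists of $\nu-j$ $E$-steps, exactly one $c_{2j-1}$-step (hence exactly one minus sign from the term $-\sum_i c_{i-2}a_{k-i}$), and divisions by $k(2\nu+1-k)>0$ for $1\le k\le 2\nu$; so each chain contributes a strictly negative amount and $\theta_j<0$. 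Concretely, with $a_0=1$, the pure-$E$ coefficients $\beta_s=[E^s]a_{2s}=\prod_{t=1}^{s}\bigl(2t(2\nu+1-2t)\bigr)^{-1}$ are positive, the coefficients $\gamma_k=[E^{(k-2j-1)/2}c_{2j-1}]a_k$ satisfy $\gamma_{2j+1}=-\bigl((2j+1)(2\nu-2j)\bigr)^{-1}<0$ and $\gamma_k=\bigl(k(2\nu+1-k)\bigr)^{-1}\bigl(\gamma_{k-2}-\beta_{(k-2j-1)/2}\bigr)<0$ for odd $k\le 2\nu-1$, and $\theta_j=\gamma_{2\nu-1}-\beta_{\nu-j}<0$ (with $\theta_\nu=-1$). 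With this repair, and with the inductive chain computation actually written out rather than sketched, your argument is complete and yields \eqref{eq:ccond} exactly as claimed.
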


\begin{prop}[Oblomkov]\label{prop:Obl}
  For a given partition $\lambda$ of length $\ell$, set
  \[ U_\lambda(x) = x^2 - 2 D_{xx} \log H_\lambda(x)+2\ell.\]
  Then, the Schr\"odinger operator $L_\lambda=-D_{xx} + U_\lambda(x)$ has trivial monodromy.
\end{prop}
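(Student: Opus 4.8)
The plan is to exhibit $L_\lambda$ as a rational Darboux--Crum dressing of the quantum harmonic oscillator and to transport the meromorphy of solutions through this dressing by means of Crum's Wronskian identity.

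First I would pass to the Hermite functions $\psi_{k_i}(x)=H_{k_i}(x)e^{-x^2/2}$, which are formal eigenfunctions of $L_0=-D_{xx}+x^2$ with $L_0\psi_{k_i}=(2k_i+1)\psi_{k_i}$. Since the factor $e^{-x^2/2}$ is common to all the seeds, $\Wr[\psi_{k_\ell},\ldots,\psi_{k_1}]=e^{-\ell x^2/2}H_\lambda(x)$, and therefore
\[ U_\lambda(x)=x^2-2D_{xx}\log\!\big(e^{-\ell x^2/2}H_\lambda(x)\big)=x^2-2D_{xx}\log\Wr[\psi_{k_\ell},\ldots,\psi_{k_1}], \]
because $-2D_{xx}(-\ell x^2/2)=2\ell$. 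Thus $L_\lambda$ is exactly the $\ell$-step Darboux--Crum transform of $L_0$ with seeds $\psi_{k_1},\ldots,\psi_{k_\ell}$. I would then record that $L_0$ itself is monodromy-free: for every $E\in\mathbb{C}$ the equation $-\psi''+x^2\psi=E\psi$ has a fundamental system of entire solutions (parabolic cylinder functions), so all of its solutions are entire, hence meromorphic, at every point of $\mathbb{C}$.

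Now comes the transport step. For $E\notin\{2k_1+1,\ldots,2k_\ell+1\}$, Crum's identity produces the two-parameter family
\[ \psi_E(x)=\frac{\Wr[\psi_{k_\ell},\ldots,\psi_{k_1},\phi_E(x)]}{\Wr[\psi_{k_\ell},\ldots,\psi_{k_1}]},\qquad L_0\phi_E=E\phi_E, \]
of solutions of $L_\lambda\psi=E\psi$. The numerator is a Wronskian of entire functions, hence entire, while the denominator $e^{-\ell x^2/2}H_\lambda$ is entire; so $\psi_E$ is a ratio of entire functions and is meromorphic at every $\xi\in\mathbb{C}$. Moreover the assignment $\phi_E\mapsto\psi_E$ is injective for such $E$ --- a nonzero kernel element would be an $L_0$-eigenfunction at eigenvalue $E$ lying in $\lspan\{\psi_{k_1},\ldots,\psi_{k_\ell}\}$, which would force $E$ to equal one of the deleted eigenvalues --- so this family exhausts the two-dimensional solution space. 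Hence $L_\lambda$ has trivial monodromy at every $\xi$ for every $E$ outside the finite exceptional set.

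The last, and I expect hardest, part is the finitely many exceptional eigenvalues $E=2k_i+1$, at which the Crum formula degenerates. These must be handled separately, either by a confluence argument (letting $E\to 2k_i+1$ and checking that the limiting ``missing'' solution acquires no logarithmic term) or by a purely local analysis at the singular points of $U_\lambda$, which are precisely the zeros of $H_\lambda$. The local analysis is where the Duistermaat--Gr\"unbaum criterion \eqref{eq:ccond} must be verified directly, and it requires the arithmetic input that a zero of $H_\lambda$ of multiplicity $r$ is triangular, $r=\nu(\nu+1)/2$, together with the vanishing of the odd-index Laurent coefficients of $U_\lambda$ there --- equivalently, that Wronskians of Hermite polynomials form locus configurations. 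This local verification at higher-order zeros of $H_\lambda$ is the genuine obstacle, and it is exactly the content of Oblomkov's extension of the Duistermaat--Gr\"unbaum classification to potentials of quadratic growth; alternatively, one may simply invoke that classification.
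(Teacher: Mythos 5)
The paper does not actually prove Proposition~\ref{prop:Obl}: it is recalled ``without proof'' and attributed to Oblomkov \cite{oblomkov1999monodromy}, so the honest benchmark here is whether your argument constitutes a self-contained proof. It does not quite. Your reduction is correct and well executed up to the last step: identifying $U_\lambda$ as the $\ell$-step Darboux--Crum transform of the oscillator (the computation $\Wr[\psi_{k_\ell},\ldots,\psi_{k_1}]=e^{-\ell x^2/2}H_\lambda$ and the shift by $2\ell$ are right), noting that all solutions of $-\phi''+x^2\phi=E\phi$ are entire, and transporting meromorphy through Crum's formula with the injectivity argument for $E\notin\{2k_i+1\}$ --- all of this is sound and gives trivial monodromy at every $\xi$ for all but finitely many $E$. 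But Definition~\ref{def:ratext} demands meromorphy for \emph{every} $E$, and your treatment of the exceptional values $E=2k_i+1$ is only a statement of intent: the ``confluence argument'' is not carried out, and the alternative you offer --- verifying \eqref{eq:ccond} via triangular multiplicities and vanishing odd Laurent coefficients, or ``simply invoking that classification'' --- is circular, since those facts are exactly what Proposition~\ref{prop:Obl} (together with Proposition~\ref{prop:dg}) delivers; indeed the paper \emph{derives} the triangularity in Proposition~\ref{prop:tri} from these two results rather than assuming it.

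The gap is real but closable with a short additional argument, in either of two standard ways. (1) At $E=2k_i+1$, Crum's formula applied to the non-normalizable oscillator solution at that energy still yields one meromorphic solution of $L_\lambda\psi=E\psi$, and the Krein--Adler ``deleted Wronskian''
\[
\frac{\Wr[\psi_{k_\ell},\ldots,\widehat{\psi_{k_i}},\ldots,\psi_{k_1}]}{\Wr[\psi_{k_\ell},\ldots,\psi_{k_1}]}
\]
is a second, linearly independent eigenfunction of $L_\lambda$ at the same eigenvalue; both are ratios of entire functions, so the full solution space is meromorphic at every $\xi$. (2) Alternatively, argue by analyticity in $E$: each zero $\xi$ of $H_\lambda$ is a regular singular point whose indicial roots do not depend on $E$, and the coefficient of the possible logarithmic term in the Frobenius solution with the smaller exponent is a polynomial in $E$; since it vanishes for all $E$ outside a finite set (by your generic-$E$ result), it vanishes identically, so monodromy is trivial for every $E$. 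Adding either of these paragraphs would turn your proposal into a genuine, self-contained proof of the proposition, which is more than the paper itself provides.
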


We also recall the following result \footnote{We have chosen to perform a $\lambda$ dependent shift in the potential $\cU_\lambda$ in order to have a simpler expession for the eigenvalues, which are now independent of $\lambda$. This is a different convention than the one adopted in Proposition 5.2 of \cite{gomez2013rational}, but a purely cosmetic one.}, which follows easily from
Proposition 5.2 in \cite{gomez2013rational}.
\begin{prop}
  Let $\lambda$ be a partition of length $\ell$. Then,
  for each $k\notin K_\lambda$, the meromorphic function
  \begin{equation}
    \label{eq:psilamk}
    \psi_{\lambda,k}(x) = \frac{H_{\lambda,k}(x)}{H_\lambda(x)}
    e^{-x^2/2} 
  \end{equation}
 is a formal eigenfunction of $L_\lambda$ with
  eigenvalue $2k+1$. 
  \end{prop}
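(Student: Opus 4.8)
The plan is to verify directly that $\psi_{\lambda,k}$ satisfies $L_\lambda[\psi_{\lambda,k}] = (2k+1)\psi_{\lambda,k}$ by reducing the claim to a known identity about Wronskians and the classical Hermite operator. First I would recall that the classical Hermite function $h_k(x) = H_k(x) e^{-x^2/2}$ is an eigenfunction of the harmonic oscillator: $-h_k'' + x^2 h_k = (2k+1) h_k$. The operator $L_\lambda = -D_{xx} + x^2 - 2D_{xx}\log H_\lambda + 2\ell$ is, up to the cosmetic constant shift by $2\ell$, precisely the Darboux--Crum (state-deleting) transformation of the harmonic oscillator obtained by factorizing out the eigenfunctions $h_{k_1}, \ldots, h_{k_\ell}$. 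The standard Crum formula then states that the transformed operator has eigenfunctions
\[
\psi_{\lambda,k}(x) = \frac{\Wr[h_{k_\ell}, \ldots, h_{k_1}, h_k](x)}{\Wr[h_{k_\ell}, \ldots, h_{k_1}](x)}
\]
with the same eigenvalue $2k+1$ (the constant shift by $2\ell$ being chosen precisely so that the eigenvalue is $2k+1$ rather than $2k+1 - 2\ell$ or similar). So the key step is to rewrite this ratio of Wronskians of Hermite \emph{functions} as the ratio $H_{\lambda,k}/H_\lambda \cdot e^{-x^2/2}$ appearing in \eqref{eq:psilamk}.

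The bridge between the two is the elementary Wronskian identity $\Wr[f g_1, \ldots, fg_n] = f^n \Wr[g_1, \ldots, g_n]$ for any sufficiently differentiable $f$. Applying this with $f = e^{-x^2/2}$ and $g_i = H_{k_i}$ gives
\[
\Wr[h_{k_\ell}, \ldots, h_{k_1}] = e^{-\ell x^2/2} \Wr[H_{k_\ell}, \ldots, H_{k_1}] = e^{-\ell x^2/2} H_\lambda,
\]
and likewise $\Wr[h_{k_\ell}, \ldots, h_{k_1}, h_k] = e^{-(\ell+1)x^2/2} H_{\lambda,k}$. Dividing, the exponential factors collapse to $e^{-x^2/2}$ and we recover exactly $\psi_{\lambda,k} = (H_{\lambda,k}/H_\lambda) e^{-x^2/2}$. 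Thus the claim reduces to citing Proposition 5.2 of \cite{gomez2013rational} (as the footnote indicates), which establishes precisely that these Wronskian ratios are formal eigenfunctions of $L_\lambda$; the only thing to check is that the $\lambda$-dependent shift $+2\ell$ in $U_\lambda$ normalizes the eigenvalue to $2k+1$. Since the untransformed $h_k$ has eigenvalue $2k+1$ and a state-deleting Darboux transformation by $\ell$ states shifts a potential by a term whose bookkeeping contributes exactly the $2\ell$, the arithmetic works out; I would simply track the constants once to confirm.

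The main (and essentially only) obstacle is purely notational: keeping the order of entries in the Wronskians consistent (the entries are listed as $H_{k_\ell}, \ldots, H_{k_1}$, i.e. in increasing degree order, which matters only up to an overall sign that cancels in the ratio), and correctly matching the constant shift convention so that the eigenvalue comes out as $2k+1$ rather than a $\lambda$-shifted value. There is no analytic difficulty here — "formal eigenfunction" means we work with the meromorphic solution near each pole and away from poles, with no $\mathrm{L}^2$ or boundary considerations — so the proof is a short computation invoking the factorization identity for Wronskians together with the already-cited Proposition 5.2. I would present it as: state the Wronskian factorization lemma, apply it to both numerator and denominator, and conclude by the cited eigenvalue computation, noting the cosmetic nature of the shift as in the footnote.
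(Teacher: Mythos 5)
Your proposal is correct and follows essentially the route the paper intends: the paper simply states that the result ``follows easily from Proposition 5.2 in \cite{gomez2013rational}'' modulo the cosmetic shift by $2\ell$, and your argument spells this out via the standard Wronskian factorization $\Wr[fg_1,\ldots,fg_n]=f^n\Wr[g_1,\ldots,g_n]$ with $f=e^{-x^2/2}$ and the Darboux--Crum eigenvalue preservation, with the $+2\ell$ arising from $-2D_{xx}\log e^{-\ell x^2/2}$. The constant bookkeeping you defer does check out, so no gap remains.
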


\begin{prop}\label{prop:tri}
 The  multiplicity of every root of  $H_\lambda$ is a triangular number.
\end{prop}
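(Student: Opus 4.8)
The plan is to read off the multiplicity of a root of $H_\lambda$ from the leading Laurent coefficient of the potential $U_\lambda$ at that point, and then to invoke the Duistermaat--Gr\"unbaum necessary condition \eqref{eq:ccond} for trivial monodromy, which is available thanks to Oblomkov's theorem (Proposition~\ref{prop:Obl}).

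First I would fix a root $\xi$ of $H_\lambda$, set $m=\ord_\xi H_\lambda\geq 1$, and write $H_\lambda(x)=(x-\xi)^m g(x)$ with $g$ holomorphic and nonvanishing near $\xi$. Differentiating $\log H_\lambda = m\log(x-\xi)+\log g$ twice gives $D_{xx}\log H_\lambda(x) = -m(x-\xi)^{-2} + D_{xx}\log g(x)$, where the last term is holomorphic at $\xi$. Since $x^2+2\ell$ is holomorphic at $\xi$ as well, the potential $U_\lambda(x)=x^2-2D_{xx}\log H_\lambda(x)+2\ell$ has a pole of order exactly two at $\xi$, with Laurent expansion $U_\lambda(x)=\sum_{j\geq -2}c_j(x-\xi)^j$ and leading coefficient $c_{-2}=2m\neq 0$. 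Away from the zeros of $H_\lambda$ the function $U_\lambda$ is holomorphic, being a rational perturbation of $x^2$, so these are its only singular points and there is nothing further to check there.

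Then I would invoke Proposition~\ref{prop:Obl}: $L_\lambda=-D_{xx}+U_\lambda$ is monodromy-free, hence in particular has trivial monodromy at $\xi$. Since $c_{-2}\neq 0$, Proposition~\ref{prop:dg} applies, and the first relation in \eqref{eq:ccond} forces $c_{-2}=\nu(\nu+1)$ for some integer $\nu\geq 1$ (the vanishing of the odd Laurent coefficients $c_{2j-1}$ is extra information that we do not need here). Combining with $c_{-2}=2m$ gives $2m=\nu(\nu+1)$, that is
\[ m=\frac{\nu(\nu+1)}{2}, \]
the $\nu$-th triangular number, which is precisely the assertion. I do not expect any real obstacle: the argument is essentially a one-line logarithmic-derivative computation followed by two citations. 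The only point worth a moment's care is checking that the pole of $U_\lambda$ at a zero of $H_\lambda$ has order exactly two, so that $c_{-2}=2m$ is nonzero and Proposition~\ref{prop:dg} is directly applicable; this is immediate from the fact that the second logarithmic derivative turns a zero of finite order into a double pole.
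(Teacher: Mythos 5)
Your argument is correct and is essentially the paper's own proof: the paper likewise identifies the root multiplicity $m$ with $\tfrac12 c_{-2}$ of the Laurent expansion of $U_\lambda$ at the root and then invokes Propositions~\ref{prop:dg} and \ref{prop:Obl} to force $2m=\nu(\nu+1)$. Your explicit logarithmic-derivative computation of $c_{-2}=2m$ simply fills in the detail the paper leaves implicit.
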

\begin{proof}
  This follows from Proposition \ref{prop:dg} and \ref{prop:Obl}.  The multiplicity $m_i$ of a root $\xi_i$ of
$H_\lambda$ is precisely $\frac12 c_{-2}$, where $c_{-2}$ is the leading
  coefficient of the Laurent expansion of $U_\lambda$ about the root $\xi_i$.  The desired conclusion is then a consequence of  \eqref{eq:ccond}.
\end{proof}

We have now all the necessary elements to characterize $\cU_\lambda$  as those polynomials that satisfy a set of linear differential constraints evaluated at the zeros of $H_\lambda$.

\begin{prop}\label{prop:Udef}
For an arbitrary partition $\lambda$, let $H_\lambda$ and $\cU_\lambda$ be as in \eqref{eq:Hlam} and \eqref{eq:cU}. Let $\{\xi_i\}_{i=1}^N$ be the roots of $H_\lambda$ with multiplicities $m_i=\frac{1}{2}\nu_i(\nu_i+1)$.  A polynomial $p\in\cU_\lambda$ if and only if the following linear differential constraints hold:
\begin{equation}\label{eq:diffcon}
(pF_i)^{(j)}(\xi_i)=0,\qquad j\in M_i,\quad i=1,\dots,N,
\end{equation}
where
 \begin{equation}
    \label{eq:Fidef}
     F_i(x) = \prod_{j\neq i}  (x-\xi_j)^{-m_j}    e^{-x^2/2}  
  \end{equation}
  and
\[ M_i = \{ j\in \Nset_0: j< \frac12 \nu_i(\nu_i-1) \}\cup \{ 2j-1 +
    \frac12 \nu_i(\nu_i-1) : j=1,\ldots,\nu_i \}\] 
\end{prop}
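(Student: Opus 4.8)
The plan is to translate membership in $\cU_\lambda$ into a statement about the local behaviour of the associated meromorphic eigenfunctions at the roots of $H_\lambda$, and then read off the differential constraints from the Laurent expansions supplied by Proposition~\ref{prop:dg}. First I would observe that $p\in\cU_\lambda$ if and only if the function $p(x)F_i(x)\cdot\prod_{j}(x-\xi_j)^{m_j}\cdot H_\lambda(x)^{-1}$—equivalently, after matching powers of $(x-\xi_i)$, the combination $p F_i$—has the right order of vanishing at every $\xi_i$. More precisely, writing a general element of $\cU_\lambda$ as $\sum_k c_k H_{\lambda,k}$ and dividing by $H_\lambda e^{-x^2/2}$, one gets a finite linear combination of the formal eigenfunctions $\psi_{\lambda,k}$; by the displayed Proposition just before the statement, each $\psi_{\lambda,k}$ is a formal eigenfunction of $L_\lambda$, and by Proposition~\ref{prop:Obl} the operator $L_\lambda$ is monodromy-free, so $\psi_{\lambda,k}$ is meromorphic at each $\xi_i$ with a pole of order exactly $\nu_i$ (the $\nu_i$ being those of Proposition~\ref{prop:tri}, since $m_i=\tfrac12\nu_i(\nu_i+1)$ forces $c_{-2}=2m_i=\nu_i(\nu_i+1)$). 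Conversely, a polynomial $p$ lies in $\cU_\lambda$ precisely when $p\,H_\lambda^{-1}e^{-x^2/2}$ is such a formal eigenfunction-like object at every $\xi_i$: this is where I would invoke the codimension count already established in the Preliminaries ($\operatorname{codim}\cU_\lambda=|\lambda|=\sum_i m_i$) to match the number of independent constraints.

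Next I would make the local analysis explicit. Near $\xi_i$ we have $H_\lambda(x)=(x-\xi_i)^{m_i}h_i(x)$ with $h_i(\xi_i)\neq 0$, and $F_i(x)=(x-\xi_i)^{-m_i}\big(H_\lambda(x)\big)^{-1}e^{-x^2/2}\cdot\big((x-\xi_i)^{m_i}h_i(x)\big)$—more usefully, $p(x)F_i(x)$ differs from $p(x)\,e^{-x^2/2}/H_\lambda(x)$ only by the holomorphic nonvanishing factor $(x-\xi_i)^{m_i}h_i(x)^{-1}\cdot$(units), so that $p F_i$ is holomorphic at $\xi_i$ exactly when $p\,e^{-x^2/2}/H_\lambda$ has a pole of order at most $m_i$ there. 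The content of the constraints is finer: I claim $p\in\cU_\lambda$ iff $p\,e^{-x^2/2}/H_\lambda$ is a formal eigenfunction of $L_\lambda$ at $\xi_i$, and by \eqref{eq:psilaurent}--\eqref{eq:acond} such a function has Laurent expansion $\sum_{j\ge 0}a_j(x-\xi_i)^{j-\nu_i}$ with $a_1=a_3=\cdots=a_{2\nu_i-1}=0$. Multiplying through by $(x-\xi_i)^{m_i}$ (which raises the exponent floor from $-\nu_i$ to $m_i-\nu_i=\tfrac12\nu_i(\nu_i-1)$) and absorbing the nonvanishing holomorphic factor $h_i^{-1}$, the condition "$p F_i$ holomorphic at $\xi_i$" becomes "the coefficients of $(x-\xi_i)^{0},\dots,(x-\xi_i)^{m_i-\nu_i-1}$ vanish", i.e. $(pF_i)^{(j)}(\xi_i)=0$ for $j<\tfrac12\nu_i(\nu_i-1)$; and the parity conditions $a_{2t-1}=0$ for $t=1,\dots,\nu_i$ become $(pF_i)^{(j)}(\xi_i)=0$ for $j=2t-1+\tfrac12\nu_i(\nu_i-1)$, $t=1,\dots,\nu_i$. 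This is exactly the set $M_i$ in the statement.

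The nontrivial direction is showing that these constraints are not only necessary but sufficient, and that they are independent so that the dimension count closes. For necessity: every $H_{\lambda,k}$ gives, via $\psi_{\lambda,k}$, a formal eigenfunction, so $H_{\lambda,k}F_i$ is holomorphic at $\xi_i$ with the asserted vanishing pattern; by linearity so is $pF_i$ for every $p\in\cU_\lambda$. For sufficiency and independence: I would argue that the constraints at $\xi_i$ cut out a codimension exactly $|M_i|=(m_i-\nu_i)+\nu_i=m_i$ subspace, that the sets of constraints at distinct $\xi_i$ are "independent" because they are localized at distinct points (a Chinese-remainder/interpolation argument on $\mathbb C[x]$), so the total codimension of the constraint variety is $\sum_i m_i=|\lambda|=\operatorname{codim}\cU_\lambda$; since $\cU_\lambda$ is contained in the constraint variety and they have equal (finite) codimension in $\mathbb C[x]$, they coincide. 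The main obstacle I anticipate is precisely this independence/counting step: one must verify that $|M_i|=m_i$ (an elementary check: the "low" block contributes $\tfrac12\nu_i(\nu_i-1)$ constraints and the "parity" block contributes $\nu_i$, summing to $\tfrac12\nu_i(\nu_i+1)=m_i$), and, more delicately, that imposing the Laurent-type vanishing at one root does not accidentally force relations among the coefficients that would collapse the codimension—this is where the meromorphicity from Oblomkov's monodromy-free property, rather than any explicit formula for the $\alpha_{i,j}$, does the essential work and why the original Proposition~5.4 argument fails when multiplicities exceed one. A secondary technical point is bookkeeping the effect of the nonvanishing holomorphic factor $h_i(x)^{-1}$ when passing from "$p\,e^{-x^2/2}/H_\lambda$ is a formal eigenfunction" to "$(pF_i)^{(j)}(\xi_i)=0$"; since multiplication by a unit in the local ring preserves both the order of vanishing and, crucially, does \emph{not} preserve individual parity conditions unless handled correctly, I would phrase the parity constraints intrinsically in terms of the eigenfunction Laurent data and only at the very end convert to derivatives of $pF_i$.
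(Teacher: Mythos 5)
Your proposal follows essentially the same route as the paper: necessity of the constraints for the basis elements $H_{\lambda,k}$ via the trivial-monodromy Laurent structure of $\psi_{\lambda,k}$ (Propositions \ref{prop:dg} and \ref{prop:Obl}), then sufficiency by dimension exhaustion, matching $\sum_i |M_i| = \sum_i m_i = |\lambda| = \operatorname{codim}\cU_\lambda$. The step you flag as delicate---independence of the constraints at a single root---is closed in the paper by the simple triangularity observation $(pF_i)^{(j)}(\xi_i) = F_i(\xi_i)\,p^{(j)}(\xi_i) + \text{lower order terms}$ with $F_i(\xi_i)\neq 0$ (independence across distinct roots being Proposition 4.12 of \cite{garcia2016bochner}), and note that since a general $p\in\cU_\lambda$ is a linear combination of eigenfunctions with different eigenvalues, the ``iff formal eigenfunction'' phrasing should be dropped in favour of the Laurent vanishing pattern alone, which is what your argument actually uses; also, with $F_i$ as defined there is no leftover holomorphic unit to worry about, since $pF_i = (x-\xi_i)^{m_i}\,p\,e^{-x^2/2}/H_\lambda$ exactly (up to the constant leading coefficient of $H_\lambda$).
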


\begin{proof}
Since conditions \eqref{eq:diffcon} are linear in $p$ and independent of $k$, it suffices to prove them for a basis element $p=H_{\lambda,k}$ and they will extend by linearity to every $p\in\cU_\lambda$.
Let \[ H_\lambda(x) =\prod_{i=1}^N (x-\xi_i)^{m_i} ,\]
  where by Proposition~\ref{prop:tri} we have $m_i = \frac12 \nu_i (\nu_i+1)$ with $\nu_i\in \Nset$.
Let us fix a root $\xi_i$ and multiply \eqref{eq:psilamk} by $(x-\xi_i)^{m_i}$ to obtain
\[ (x-\xi)^{m_i} \psi_{\lambda,k}(x) = H_{\lambda,k}(x) F_i(x).\] 
Since $\psi_{\lambda,k}(x)$ is the eigenfunction of a trivial monodromy potential, then its Laurent expansion around $\xi_i$ is given by \eqref{eq:psilaurent}, and therefore we have the following Laurent expansion of  $H_{\lambda,k}(x) F_i(x)$
\[  H_{\lambda,k}(x) F_i(x)= (x-\xi_i)^{\frac12\nu_i(\nu_i-1)}\sum_{j=0} a_j (x-\xi_i)^j   \]
where by \eqref{eq:acond} we also have $ a_1=a_3=\ldots =a_{2\nu_i-1} = 0$.
The differential conditions \eqref{eq:diffcon} are precisely the ones that guarantee this behaviour around $x=\xi_i$. Note that there are always $\nu_i$ constraints coming from the vanishing of the $a_j$ coefficients, but if $\nu_i>1$ there are also an extra $\frac12\nu_i(\nu_i-1)$ set of constraints for the lower orders of the expansion. In total, at every root $\xi_i$ we have 
\[  \frac12\nu_i(\nu_i-1) + \nu_i=  \frac12\nu_i(\nu_i+1)=m_i\]
constraints, i.e. as many as the root multiplicity $m_i$.
The total number of constraints at all roots is thus
 \[ m_1+\cdots + m_N = \deg H_\lambda = |\lambda |. \]
It is easy to show that conditions \eqref{eq:diffcon} are differential expressions of different orders evaluated at each point, and they must be linearly independent since
\[  (pF_i)^{(j)}(\xi_i)=  F_i(\xi_i) \cdot p^{(j)}(\xi_i)+\text{lower order terms},\qquad j\in M_i,\quad i=1,\dots,N,  \]
and by construction $F_i(\xi_i)\neq 0$. Differential constraints evaluated at different points in the complex plane are also independent. For a proof of this last statement, see Proposition 4.12 in \cite{garcia2016bochner}.

As mentioned before, by linearity these conditions must be satisfied by any $p\in\cU_\lambda$. Since $\cU_\lambda$ has codimension $|\lambda |$ in the set of all polynomials $\mathbb{C}[x]$ by dimension exhaustion we conclude that $p\in\cU_\lambda$ if and only if conditions \eqref{eq:diffcon} hold, thus establishing the claim.

\end{proof}

\begin{prop}
  \label{prop:etasquared}
For every partition $\lambda$ and every polynomial $p\in\mathbb{C}[x]$, we have 
\begin{equation}
 H_\lambda^2 p \in \cU_\lambda
\end{equation}
\end{prop}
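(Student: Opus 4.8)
The plan is to deduce the statement from the characterization of $\cU_\lambda$ established in Proposition~\ref{prop:Udef}. That proposition identifies $\cU_\lambda$ with the set of polynomials $q$ satisfying $(qF_i)^{(j)}(\xi_i)=0$ for all $j\in M_i$ and all $i=1,\dots,N$, where the $\xi_i$ are the roots of $H_\lambda$. So it suffices to verify these finitely many linear differential constraints for the particular polynomial $q=H_\lambda^2\,p$. The point is that $q F_i$ vanishes at each $\xi_i$ to an order that already exceeds every derivative order appearing in $M_i$, so the constraints hold for a trivial reason and none of the delicate Laurent-coefficient information (the role of \eqref{eq:acond}) is needed.

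Concretely, first I would write $H_\lambda(x)=\prod_{i=1}^N(x-\xi_i)^{m_i}$ with $m_i=\tfrac12\nu_i(\nu_i+1)$ (Proposition~\ref{prop:tri}) and substitute into the definition \eqref{eq:Fidef} of $F_i$. The $\prod_{j\neq i}(x-\xi_j)^{-m_j}$ factor of $F_i$ cancels all but one factor of $H_\lambda$, and one is left with
\[
 H_\lambda^2\, p\, F_i = (x-\xi_i)^{2m_i}\Big[\prod_{j\neq i}(x-\xi_j)^{m_j}\Big] p(x)\, e^{-x^2/2},
\]
which is analytic in a neighbourhood of $\xi_i$ and vanishes there to order at least $2m_i$. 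Hence $(qF_i)^{(j)}(\xi_i)=0$ for every $j\leq 2m_i-1$.

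It then only remains to observe the elementary inequality $\max M_i\leq 2m_i-1$. The largest element of $M_i$ is $2\nu_i-1+\tfrac12\nu_i(\nu_i-1)$, and the inequality $2\nu_i-1+\tfrac12\nu_i(\nu_i-1)\leq \nu_i(\nu_i+1)-1=2m_i-1$ is equivalent to $\nu_i(\nu_i-1)\geq 0$, which holds for all $\nu_i\in\Nset$. Thus every constraint of Proposition~\ref{prop:Udef} is satisfied by $q=H_\lambda^2 p$, and the proposition follows.

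I do not expect a genuine obstacle here; the only thing to get right is that the vanishing order $2m_i$ of $H_\lambda^2 F_i$ at $\xi_i$ dominates all of $M_i$, which the computation above confirms with room to spare. As a sanity check one could instead argue without Proposition~\ref{prop:Udef}, multiplying the eigenfunction identity \eqref{eq:psilamk} through by $H_\lambda$ and invoking the Laurent structure \eqref{eq:psilaurent}--\eqref{eq:acond} directly, but passing through the finite-codimension characterization of $\cU_\lambda$ is the shortest route.
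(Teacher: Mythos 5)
Your argument is correct and is essentially identical to the paper's own proof: both verify the constraints of Proposition~\ref{prop:Udef} for $q=H_\lambda^2 p$ by noting that $\ord_{\xi_i}(pH_\lambda^2 F_i)\geq \nu_i(\nu_i+1)=2m_i$ strictly exceeds $\max M_i = 2\nu_i-1+\tfrac12\nu_i(\nu_i-1)$, so every derivative in $M_i$ vanishes trivially. The arithmetic (the margin $\tfrac12\nu_i(\nu_i-1)+1>0$) matches the paper's as well.
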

\begin{proof}
It suffices to show that for any polynomial $p$ we have that
  \[ (p H_\lambda^2 F_i)^{(j)}(\xi_i) = 0,\qquad j\in M_i ,\quad i=1,\ldots, N,\]
  and it follows from Proposition~\ref{prop:Udef} that $p\,H_\lambda^2\in\cU_\lambda$.
  The above claim holds because the highest order derivative is
  \[ \max M_i = 2\nu_i -1 + \frac12 \nu_i(\nu_i-1) = \frac12
    \nu_i(\nu_i+3) -1 ,\quad i=1,\ldots, N \]
  but the order of $\xi_i$ as a root of $(p H_\lambda^2 F_i)$ satisfies
  \begin{align*}
    \ord_{\xi_i} (pH_\lambda^2 F_i)\geq \nu_i(\nu_i+1),
  \end{align*}
so
 \[ \ord_{\xi_i} (pH_\lambda^2 F_i)- \max M_i\geq\frac12\nu_i(\nu_i-1)+1 >0 \]
\end{proof}

\begin{example}
The counterexample to Proposition 5.4 in \cite{gomez2013rational} found by Dur\'an in \cite{Duran2019proof} is given by $\lambda=(2,1)$, for which $H_\lambda=32x^3$. This is the simplest example for which $H_\lambda$ has a single root of multiplicity higher than one. We derive the differential constraints that define $\cU_\lambda$ by noting that
\[N=1,\quad \xi_1=0,\quad \nu_1=2, \quad M_1=\{0\}\cup\{2,4\},\quad F_1(x)={\rm e}^{-x^2 /2}.\]
The differential constraints on an element $p\in\cU_\lambda$ are:
\[ \Big(p {\rm e}^{-x^2 /2}\Big)^{(j)}(0)=0,\qquad j=0,2,4,\]
so we can characterize $\cU_\lambda$ as the following codimension 3 subspace of $\cP$:
\[ \cU_\lambda=\{p\in\cP: p(0)=p''(0)=p^{(iv)}(0)=0\}. \]
This is readily checked by noting that the polynomials \[H_{\lambda,k}=\Wr[H_1,H_3,H_k],\qquad k\in \Nset_0\setminus\{1,3\}\] do not contain any terms in $x^j$ for $j=0,2,4$. Moreover, it is also obvious that $p H_\lambda^2 \in\cU_\lambda$ for any polynomial $p$.
\end{example}

\subsection{Completeness of exceptional Hermite polynomials}

In this last section, we finish establishing the completeness of $\cU_\lambda$, following essentially the same steps given in \cite{gomez2013rational}. We restrict from here on to even partitions, in order to guarantee that  $H_\lambda>0$ for all $x\in\mathbb R$. Likewise, the variable $x$ that denoted above an arbitrary point in $\mathbb C$ will henceforth be restricted to the real line $\mathbb R$.

We define the orthogonality weight for exceptional Hermite polynomials as 

\begin{equation}\label{eq:Wlam}
W_\lambda(x) = \frac{e^{-x^2}}{H_\lambda(x)^2} dx,\quad x\in \Rset
\end{equation}
 which is ensured to be regular and possess finite moments of all orders.

Let $\cH_\alpha=\mathrm{L}^2[(0,\infty),y^\alpha e^{-y} dy]$ denote the Hilbert
space of the classical Laguerre polynomials and $\cH =
\mathrm{L}^2[\Rset,e^{-x^2} dx]$ the Hilbert space of the Hermite
polynomials. Throughout the proof, we will make use of Theorem 5.7.1 in \cite{Sz}, which asserts that $\cP$, the vector space of univariate polynomials, is
dense in $\cH_\alpha,\; \alpha>-1$ and in $\cH$.  We will write
\[  q(x) \cP(x) = \{  q(x) p(x) \colon p \in \cP \}\]
to denote  a polynomial subspace with a common factor $q(x)$.
\begin{lem}
  \label{lem:density1}
  The polynomial subspace $(1+y^m)\cP(y)$ is dense in $\cH_\alpha$ for
  every integer $m>0$ and every real $\alpha>0$.
\end{lem}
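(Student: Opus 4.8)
The plan is to convert the statement into a density result for ordinary polynomials in a single weighted $\mathrm{L}^2$ space, and then invoke Theorem 5.7.1 of \cite{Sz}. Put $w(y)=(1+y^m)^2 y^\alpha e^{-y}$ on $(0,\infty)$; this weight has finite moments of all orders, so $\cP\subset\mathrm{L}^2[(0,\infty),w\,dy]$. The first step is to observe that multiplication by $1+y^m$,
\[ U\colon \mathrm{L}^2[(0,\infty),w\,dy]\longrightarrow\cH_\alpha,\qquad Ug=(1+y^m)g,\]
is a unitary map: it is isometric because $\|Ug\|_{\cH_\alpha}^2=\int_0^\infty(1+y^m)^2|g|^2y^\alpha e^{-y}\,dy=\|g\|_{\mathrm{L}^2(w\,dy)}^2$, and it is onto because every $h\in\cH_\alpha$ equals $U\!\left(h/(1+y^m)\right)$ with $h/(1+y^m)\in\mathrm{L}^2(w\,dy)$ (using $1+y^m\ge1$ on $(0,\infty)$). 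Since $U$ sends $\cP$ onto $(1+y^m)\cP(y)$ and a bounded invertible operator carries dense subspaces to dense subspaces, the Lemma is equivalent to the assertion that \emph{$\cP$ is dense in $\mathrm{L}^2[(0,\infty),w\,dy]$.}

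To establish this, I would use the elementary domination $w(y)\le C_m\,y^\alpha e^{-y/2}$, where $C_m=\sup_{y\ge0}(1+y^m)^2e^{-y/2}<\infty$. Theorem 5.7.1 of \cite{Sz} gives density of $\cP$ in $\mathrm{L}^2[(0,\infty),y^\beta e^{-y}\,dy]$ for $\beta>-1$; by scaling, $\cP$ is then also dense in $\mathrm{L}^2[(0,\infty),y^\alpha e^{-\gamma y}\,dy]$ for every $\gamma>0$ and every $\alpha>-1$, in particular for $\gamma=1/2$ and the $\alpha>0$ of the Lemma. Now given $h\in\mathrm{L}^2(w\,dy)$ and $\varepsilon>0$, pick a bounded function $h_0$ supported on a compact subinterval of $(0,\infty)$ with $\|h-h_0\|_{\mathrm{L}^2(w\,dy)}<\varepsilon$ (such functions are dense in $\mathrm{L}^2(w\,dy)$); note that $h_0$ also lies in $\mathrm{L}^2[(0,\infty),y^\alpha e^{-y/2}\,dy]$, choose a polynomial $q$ with $\|h_0-q\|_{\mathrm{L}^2(y^\alpha e^{-y/2}dy)}<\varepsilon$, and conclude from the domination that $\|h_0-q\|_{\mathrm{L}^2(w\,dy)}\le\sqrt{C_m}\,\varepsilon$, whence $\|h-q\|_{\mathrm{L}^2(w\,dy)}\le(1+\sqrt{C_m})\varepsilon$. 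Letting $\varepsilon\to0$ finishes the reduced statement, and hence the Lemma.

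The point I would want to stress is why the obvious one-line argument fails, since that is the whole reason the Lemma is not trivial: for $f\in\cH_\alpha$ the product $(1+y^m)f$ need not belong to $\cH_\alpha$, so one cannot pass from ``$f\perp(1+y^m)\cP$'' to ``$(1+y^m)f\perp\cP$, hence $(1+y^m)f=0$'' by invoking completeness of polynomials in $\cH_\alpha$. Moving to the space $\mathrm{L}^2(w\,dy)$, where the factor $(1+y^m)^2$ is absorbed into the measure, is precisely what repairs this: there polynomials genuinely belong to and are dense in the space, and the unitary $U$ transports the density back to $\cH_\alpha$. Beyond this observation the argument is soft — only the domination $w\le C_m y^\alpha e^{-y/2}$ and a routine two-step $\varepsilon$-approximation — so I do not anticipate a serious technical obstacle. (An alternative finish for the reduced statement, not relying on \cite{Sz}, is the standard moment-problem argument: any $g\in\mathrm{L}^2(w\,dy)$ orthogonal to all polynomials yields $g\,w\in\mathrm{L}^1(dy)$ with all moments zero, and since $\int_0^\infty e^{cy}w(y)\,dy<\infty$ for $0<c<1/2$, the Fourier transform of $g(y)w(y)\mathbf 1_{(0,\infty)}$ extends to an analytic function on $\{\operatorname{Im} z>-1/2\}$ vanishing to infinite order at $0$, hence identically, so $g=0$.)
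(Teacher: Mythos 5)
Your proposal is correct, but it takes a genuinely different route from the paper. The paper first uses Theorem 5.7.1 of \cite{Sz} to reduce the Lemma to approximating an arbitrary \emph{polynomial} $q$ by elements $(1+y^m)p$, and achieves this with an explicit shift-and-divide construction: it sets $\hat q(y)=q(y-1)/((y-1)^m+1)$ for $y\geq 1$ (and $0$ below), approximates $\hat q$ by a polynomial $\hat p$ in the heavier Laguerre space $\cH_{2m+\alpha}$, and takes $p(y)=\hat p(y+1)$; the factor $(1+y^m)^2\leq (1+y)^{2m}$ is thus absorbed into the \emph{power} part of the weight after the unit shift. You instead observe that multiplication by $1+y^m$ is a unitary map from $\mathrm{L}^2[(0,\infty),(1+y^m)^2y^\alpha e^{-y}dy]$ onto $\cH_\alpha$ carrying $\cP$ onto $(1+y^m)\cP$, which reduces the Lemma to density of $\cP$ in the modified weighted space; you then absorb $(1+y^m)^2$ into the \emph{exponential} via the domination $(1+y^m)^2y^\alpha e^{-y}\leq C_m\,y^\alpha e^{-y/2}$ and finish with a truncation plus the rescaled Szeg\H{o} theorem (or, alternatively, a self-contained moment/Fourier-analytic argument). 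Your steps all check out: the isometry and surjectivity of the multiplication operator, the domination constant $C_m$, the density of $\cP$ in $\mathrm{L}^2[(0,\infty),y^\alpha e^{-\gamma y}dy]$ by rescaling, and the analytic-continuation argument with margin $c<1/2$ are all sound. The trade-off: the paper's argument stays entirely within the Laguerre scale $\cH_\beta$ and invokes Theorem 5.7.1 exactly as stated, at the cost of the somewhat ad hoc shift trick; your argument is more structural (the polynomial factor is simply pushed into the measure), generalizes to any weight with an exponential decay margin, and in its moment-problem variant dispenses with Szeg\H{o}'s theorem altogether, at the cost of a routine extra reduction (rescaling the exponential, truncation to bounded compactly supported functions) not literally contained in the quoted reference.
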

\begin{proof}
  Given a polynomial $q(y)$ and an $\epsilon>0$ it suffices to find a
  polynomial $p(y)$ such that
  \[ \Vert q- (1+y^m)p \Vert_{\cH_\alpha}^2 = \int_0^\infty \left(
    q(y) - (1+y^m) p(y)\right)^2 y^\alpha e^{-y} dy \leq
  \epsilon. \] Define the function
  \[ \hat{q}(y) =
  \begin{cases}
    \displaystyle \frac{q(y-1)}{(y-1)^m+1}, &  y\geq 1\\[8pt]
    0 & 0\leq y<1
  \end{cases}
  \]
  We assert that $\hat{q} \in \cH_{2m+\alpha}$ by observing that
  \begin{align*}
    \int_0^\infty \hat{q}(y)^2y^{2m+\alpha} e^{-y}\, dy &=
    \int_1^\infty
    \left(\frac{q(y-1)}{(y-1)^m+1}\right)^2 y^{2m+\alpha} e^{-y}\,  dy \\
    & = e^{-1} \int_0^\infty q(y)^2
    \left(\frac{(1+y)^{m}}{1+y^m}\right)^2 (1+y)^\alpha e^{-y} dy\\
    &< \infty
  \end{align*}
  Next, choose a polynomial $\hat{p}(y)$ such that
  \[ \Vert \hat{q} - \hat{p} \Vert_{\cH_{2m+\alpha}}^2 \leq
  \frac{\epsilon}{e}, \] and set
  \[ p(y) = \hat{p}(y+1).\] It follows that
  \begin{align*}
    &\int_0^\infty \left(q(y)- (1+y^m) p(y)\right)^2 y^\alpha
    e^{-y} dy \\
    &\quad = \int_0^\infty \left(\frac{q(y)}{(1+y^m)}-
      p(y)\right)^2 (1+y^m)^2 y^\alpha e^{-y} dy\\
    &\quad \leq \int_0^\infty \left(\frac{q(y)}{1+y^m}-
      p(y)\right)^2 (1+y)^{2m} y^\alpha e^{-y} dy\\
    &\quad = e\int_1^\infty \left(\hat{q}(y)- \hat{p}(y)\right)^2
    y^{2m+\alpha} \left(1-\frac{1}{y}\right)^\alpha
    e^{-y} dy\\
    &\quad \leq e\int_1^\infty \left(\hat{q}(y)- \hat{p}(y)\right)^2
    y^{2m+\alpha}
    e^{-y} dy\\
    &\quad \leq e \Vert \hat{q} - \hat{p}\Vert_{\cH_{2n+\alpha}}\\
    &\quad \leq \epsilon,
  \end{align*}
  as was to be shown.
\end{proof}

\begin{lem}
  \label{lem:density2}
  The polynomial subspace $(1+x^{2m}) \cP(x)$ is dense in $\cH$ for
  every integer $m>0$.
\end{lem}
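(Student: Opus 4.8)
The plan is to reduce the statement to Lemma~\ref{lem:density1} by separating even and odd parts and using the substitution $y = x^2$, the classical bridge between Hermite and Laguerre polynomials. Write $\cH = \cH_{\mathrm{ev}}\oplus\cH_{\mathrm{od}}$ for the orthogonal decomposition into even and odd functions, and let $\cP_{\mathrm{ev}}(x)$, $\cP_{\mathrm{od}}(x)$ be the subspaces of even and odd polynomials. Since $1+x^{2m}$ is an even polynomial, multiplication by it preserves parity, so
\[ (1+x^{2m})\cP(x) = (1+x^{2m})\cP_{\mathrm{ev}}(x) \oplus (1+x^{2m})\cP_{\mathrm{od}}(x), \]
with the two summands lying in $\cH_{\mathrm{ev}}$ and $\cH_{\mathrm{od}}$ respectively. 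It therefore suffices to show that each summand is dense in its half.

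Next I would introduce the two maps induced by $y = x^2$: $\Phi\colon\cH_{\mathrm{ev}}\to\cH_{-1/2}$ given by $(\Phi f)(y) = f(\sqrt y)$, and $\Psi\colon\cH_{\mathrm{od}}\to\cH_{1/2}$ given by $(\Psi f)(y) = f(\sqrt y)/\sqrt y$; a one-line change of variables shows that both are isometric isomorphisms. They carry $a(x^2)$ to $a(y)$ and $x\,b(x^2)$ to $b(y)$ respectively, hence $\Phi$ maps $(1+x^{2m})\cP_{\mathrm{ev}}(x)$ onto $(1+y^m)\cP(y)\subseteq\cH_{-1/2}$ and $\Psi$ maps $(1+x^{2m})\cP_{\mathrm{od}}(x)$ onto $(1+y^m)\cP(y)\subseteq\cH_{1/2}$. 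The odd half is then immediate: $(1+y^m)\cP(y)$ is dense in $\cH_{1/2}$ by Lemma~\ref{lem:density1} with $\alpha = 1/2 > 0$.

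The place where I expect to have to think is the even half, since $\Phi$ lands in $\cH_{-1/2}$ while Lemma~\ref{lem:density1} is stated only for $\alpha > 0$. I would handle it by observing that multiplication by $y$ is an isometric isomorphism $M_y\colon\cH_{3/2}\to\cH_{-1/2}$: one has $\|y g\|_{\cH_{-1/2}} = \|g\|_{\cH_{3/2}}$, and every $h\in\cH_{-1/2}$ equals $y\cdot(h/y)$ with $h/y\in\cH_{3/2}$. By Lemma~\ref{lem:density1} with $\alpha = 3/2 > 0$, the subspace $(1+y^m)\cP(y)$ is dense in $\cH_{3/2}$, so its image $M_y\bigl((1+y^m)\cP(y)\bigr) = (1+y^m)\,y\,\cP(y)$ is dense in $\cH_{-1/2}$; since $y\,\cP(y)\subseteq\cP(y)$, the larger subspace $(1+y^m)\cP(y)$ is a fortiori dense in $\cH_{-1/2}$. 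Pulling back through $\Phi$ gives density of $(1+x^{2m})\cP_{\mathrm{ev}}(x)$ in $\cH_{\mathrm{ev}}$, and together with the odd half this proves the lemma. (A more direct route avoiding the substitution is also available: the elementary bound $(1+x^{2m})^2 e^{-x^2}\le C_m\,e^{-x^2/2}$ on $\Rset$ reduces the problem to approximating the polynomially bounded function $q(x)/(1+x^{2m})\in\rL^2[\Rset,e^{-x^2/2}dx]$ by a polynomial in that space, where polynomials are dense by a rescaling of Theorem~5.7.1 in \cite{Sz}; but the reduction to Lemma~\ref{lem:density1} stays closer to the structure of this section.)
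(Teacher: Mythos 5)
Your proof is correct, and it follows the same overall strategy as the paper --- split into even and odd parts, substitute $y=x^2$, and invoke Lemma~\ref{lem:density1} --- but it resolves the key difficulty differently. The obstacle in both arguments is the even half, which under $y=x^2$ lands in $\cH_{-1/2}$, outside the range $\alpha>0$ of Lemma~\ref{lem:density1}. The paper handles this concretely: it imposes $p(0)=q(0)$, so that the even part of the error, $x^2 q_2(x^2)-q_0x^{2m}-(1+x^{2m})x^2p_2(x^2)$, is divisible by $x^2$ and the change of variables produces the weight $y^{3/2}$, i.e.\ Lemma~\ref{lem:density1} is applied at $\alpha=3/2$ (and at $\alpha=1/2$ for the odd part). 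You instead keep the decomposition clean and upgrade the lemma: the isometry $M_y\colon\cH_{3/2}\to\cH_{-1/2}$ shows that $(1+y^m)\,y\,\cP(y)$, hence a fortiori $(1+y^m)\cP(y)$, is dense in $\cH_{-1/2}$, which in effect extends Lemma~\ref{lem:density1} to $\alpha=-1/2$ (and the same device, iterated, would give every $\alpha>-1$). Both routes ultimately use Lemma~\ref{lem:density1} only at $\alpha=1/2$ and $\alpha=3/2$, so they are equivalent in substance; yours is more structural and reusable, the paper's is a self-contained computation inside a single norm estimate. Your parenthetical alternative via the bound $(1+x^{2m})^2e^{-x^2}\leq C_m e^{-x^2/2}$ and density of polynomials in $\rL^2[\Rset,e^{-x^2/2}\,dx]$ is also valid and bypasses Lemma~\ref{lem:density1} entirely, at the cost of appealing to Theorem 5.7.1 of \cite{Sz} for a rescaled Hermite weight rather than staying within the Laguerre framework of this section.
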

\begin{proof}
  Given a polynomial $q(x)$ and an $\epsilon>0$ it suffices to find a
  polynomial $p(x)$ such that
  \begin{equation}
    \label{eq:d2qpepsilon}
    \Vert q- (1+x^{2m})p \Vert_{\cH}^2 = \int_\Rset
    \left( q(x) - (1+x^{2m}) p(x)\right)^2 e^{-x^2} dx \leq
    \epsilon.   \end{equation}  
  Write
  \[ q(x) = q_0 + x q_1(x^2) + x^2 q_2(x^2) \] where $q_0$ is a
  constant and where $q_1(y), q_2(y)$ are polynomials in $y=x^2$.
  Imposing the condition that $p(0)=q(0)$ let us write
  \[ p(x) = q_0+ x p_1(x^2)+ x^2 p_2(x^2) \] where $p_1(y), p_2(y)$
  are polynomials. Then, by the orthogonality of odd and even
  functions in $\cH$, the inequality \eqref{eq:d2qpepsilon} assumes
  the form
  \begin{align*}
    &\int_\Rset
    \left( q_1(x^2) - (1+x^{2m})p_1(x^2)\right)^2 x^2e^{-x^2} dx \\
    &\qquad + \int_\Rset \left(q_2(x^2) - q_0 x^{2(m-1)}-
      (1+x^{2m})p_2(x^2)\right)^2 x^4 e^{-x^2} dx\\
    &= \int_0^\infty (q_1(y) - (1+y^m) p_1(y))^2 y^{\frac{1}{2}} e^{-y} dy\\
    &\qquad + \int_0^\infty \left(q_2(y) - q_0 y^{m-1} -
      (1+y^m) p_2(y)\right)^2 y^{\frac{3}{2}} e^{-y} dy \\
    &\leq \epsilon,
  \end{align*}
  where for the first equality we employ the change of variables
  $y=x^2$.  By Lemma \ref{lem:density1}, it is possible to find
  polynomials $p_1(y), p_2(y)$ such that the above inequality is
  satisfied.
  \end{proof}

Now we are finally ready to state and prove the completeness of exceptional Hermite polynomials.

\begin{thm}\label{prop:density}
  If $\lambda$ is an even partition, the polynomial subspace $\cU_\lambda$ is dense in the Hilbert space
  $\mathrm{L}^2(\Rset,W_\lambda)$.
\end{thm}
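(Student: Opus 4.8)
The plan is to show that $\cU_\lambda$ is dense in $\rL^2(\Rset, W_\lambda)$ by transporting the problem to the classical Hermite space $\cH$ via multiplication by $H_\lambda$, and then using the two density lemmas together with Proposition~\ref{prop:etasquared} to fill in the finitely many "missing" low-degree directions. First I would observe that the map $f \mapsto f/H_\lambda$ is a unitary isomorphism from $\cH = \rL^2(\Rset, e^{-x^2}dx)$ onto $\rL^2(\Rset, W_\lambda)$, because $W_\lambda(x) = e^{-x^2}/H_\lambda(x)^2\,dx$ and $H_\lambda > 0$ on $\Rset$ for even $\lambda$. Under this isomorphism, density of $\cU_\lambda$ in $\rL^2(\Rset,W_\lambda)$ is equivalent to density of $H_\lambda\,\cU_\lambda$ in $\cH$. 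So it suffices to show that $H_\lambda\,\cU_\lambda$ is dense in $\cH$.

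Next I would use Proposition~\ref{prop:etasquared}: for every polynomial $p$, $H_\lambda^2 p \in \cU_\lambda$, hence $H_\lambda^3 p \in H_\lambda\,\cU_\lambda$. Thus $H_\lambda^3\,\cP \subseteq H_\lambda\,\cU_\lambda$, and it would be enough to prove that $H_\lambda^3\,\cP$ is dense in $\cH$. Write $d = \deg H_\lambda = |\lambda|$; since $\lambda$ is even, $d$ is even, so $H_\lambda^3$ is a polynomial of even degree $3d$ whose leading term behaves like $c\,x^{3d}$ with $c>0$, and $H_\lambda^3 > 0$ on all of $\Rset$. The point is that $H_\lambda^3\,\cP$ "looks like" $x^{3d}\cP$ at infinity, which is the situation handled by Lemma~\ref{lem:density2} (with $2m = 3d$, using that $3d$ is even). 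The natural route is: given $q \in \cP$ and $\epsilon > 0$, first approximate $q$ in $\cH$ by an element of $(1 + x^{3d})\cP$ using Lemma~\ref{lem:density2}, then note that $(1+x^{3d})\cP$ and $H_\lambda^3\,\cP$ differ only by a polynomial correction of degree $< 3d$, which can be absorbed — more precisely, I would show $H_\lambda^3\,\cP$ is dense by the same change-of-variables argument as in Lemmas~\ref{lem:density1}–\ref{lem:density2}, replacing the model weight factor $(1+y^m)$ by $H_\lambda^3$ written in terms of $y = x^2$ when $H_\lambda$ is even (which, for even partitions, it is, since the zeros come in $\pm$ pairs and $H_\lambda$ has a zero of even order at the origin only when... ), so that the estimates go through verbatim.

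The main obstacle I anticipate is the bookkeeping needed to pass from the clean model factor $(1+x^{2m})$ appearing in Lemma~\ref{lem:density2} to the genuine polynomial $H_\lambda^3$: one must check that $H_\lambda^3$ is bounded above and below by constant multiples of $1 + x^{3d}$ on $\Rset$ (which holds since $H_\lambda^3$ is a positive polynomial of degree $3d$), and then mimic the $\hat q$-shift construction so that the auxiliary function lands in the right higher-order Laguerre/Hermite space. An alternative, cleaner packaging avoids this: show directly that $r(x)\,\cP$ is dense in $\cH$ for \emph{any} polynomial $r$ that is strictly positive on $\Rset$, by the argument $\Vert q - r p\Vert_{\cH} = \Vert q/r - p\Vert_{\cH'}$ where $\cH' = \rL^2(\Rset, r^2 e^{-x^2}dx)$, reducing to density of $\cP$ in a Hilbert space with a weight that still has finite moments of all orders and super-exponential decay — this is again Theorem 5.7.1 of \cite{Sz} applied after absorbing the polynomial factor, provided one checks $q/r \in \cH'$, which fails in general because $q/r$ need not be polynomial. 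So the shift trick of Lemma~\ref{lem:density1} (replacing $y$ by $y+1$ to kill the denominator's zero at the origin, here unnecessary since $r>0$ everywhere) really is the mechanism that makes it work, and reproducing it for $r = H_\lambda^3$ is the technical heart of the proof.
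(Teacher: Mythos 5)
Your reduction step is stated backwards, and this matters for the bookkeeping. Since $\Vert f\Vert^2_{\mathrm{L}^2(\Rset,W_\lambda)}=\int_\Rset f^2 e^{-x^2}/H_\lambda^2\,dx$, the map $f\mapsto f/H_\lambda$ is a unitary isomorphism from $\mathrm{L}^2(\Rset,W_\lambda)$ \emph{onto} $\cH$, not the other way around; the image of $\cU_\lambda$ under it is $\{u/H_\lambda:u\in\cU_\lambda\}$, not $H_\lambda\,\cU_\lambda$, so the equivalence you claim is not what the isomorphism gives. Your argument survives as a chain of sufficiencies (if $H_\lambda^3\cP$ is dense in $\cH$, then $H_\lambda^4\cP$ is dense in $\mathrm{L}^2(\Rset,W_\lambda)$, and $H_\lambda^4\cP\subseteq H_\lambda^2\cP\subseteq\cU_\lambda$ by Proposition~\ref{prop:etasquared}), but with the unitary run in the correct direction you only need $H_\lambda\,\cP$ dense in $\cH$: approximate $f/H_\lambda\in\cH$ by $H_\lambda p$, and the pulled-back approximant is $H_\lambda^2p\in\cU_\lambda$. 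The detour through $H_\lambda^3$ and degree $3d$ is an artifact of the reversed map.

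The substantive gap is that the density of $H_\lambda^3\cP$ (or $H_\lambda\cP$) in $\cH$ --- which you yourself identify as the technical heart --- is only sketched, and your first idea (approximate by $(1+x^{3d})p$ and ``absorb'' a correction of degree $<3d$) does not work as stated, as you half-acknowledge. The paper closes this point without generalizing Lemmas~\ref{lem:density1}--\ref{lem:density2} to arbitrary positive polynomial factors: given $f\in\mathrm{L}^2(\Rset,W_\lambda)$, set $\hat f=(1+x^{2m})f/H_\lambda^2$ with $2m=\deg H_\lambda=|\lambda|$, note $\hat f\in\cH$ because $A=\sup_\Rset(1+x^{2m})/H_\lambda<\infty$, apply Lemma~\ref{lem:density2} verbatim to get $(1+x^{2m})p$ close to $\hat f$ in $\cH$, and then multiply by the bounded ratio $H_\lambda/(1+x^{2m})$ (bound $B$): this converts $(1+x^{2m})p$ \emph{exactly} into $H_\lambda p$, with no correction to absorb, and gives $\Vert f-H_\lambda^2p\Vert_{\mathrm{L}^2(\Rset,W_\lambda)}\leq B\,\Vert\hat f-(1+x^{2m})p\Vert_{\cH}$; Proposition~\ref{prop:etasquared} then finishes the proof. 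This two-sided comparison of $H_\lambda$ with $1+x^{2m}$ is precisely the mechanism you were groping for in the ``absorption'' paragraph. Two smaller points: your reason for discarding the ``cleaner packaging'' is off, since $q/r$ does lie in $\mathrm{L}^2(\Rset,r^2e^{-x^2}dx)$ with norm $\Vert q\Vert_\cH$ --- the genuine reason for the shift trick of Lemma~\ref{lem:density1} is that the paper only ever wants to invoke Theorem 5.7.1 of \cite{Sz} for the classical Laguerre and Hermite weights, never for a polynomially modified weight; and the evenness of $H_\lambda$ that you left hanging does hold for even partitions (one has $H_\lambda(-x)=(-1)^{|\lambda|}H_\lambda(x)$ and $|\lambda|$ is even), but on the paper's route no parity analysis of $H_\lambda$ is needed, since parity enters only through the model factor $1+x^{2m}$ inside Lemma~\ref{lem:density2}.
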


\begin{proof}
Let $f\in \mathrm{L}^2(\Rset, W_\lambda)$.  
Set
\[ \hat{f}(x) = \frac{(1+x^{2m})}{H_\lambda(x)^2}f(x)
\] and observe that
\begin{align*}
 \int_\Rset \hat{f}(x)^2 e^{-x^2}\, dx 
  &\leq A^2 \int_\Rset \left(\frac{f(x)}{H_\lambda(x)}\right)^2
  e^{-x^2}\,dx = \int_\Rset f(x)^2 W_\lambda(x) \,dx < \infty
\end{align*}
where 
\[ A = \sup \left\{ \frac{1+x^{2m}}{H_\lambda(x)} \colon x \in \Rset
\right\}<\infty .\] Let $\epsilon>0$ be given.  
Set
\[ B = \sup \left\{ \frac{H_\lambda(x)}{1+x^{2m}} \colon x \in \Rset
\right\}<\infty.\] By Lemma \ref{lem:density2} we can find a
polynomial $p(x)$ such that
\[ \int_\Rset \left( \hat{f}(x)- (1+x^{2m}) p(x)\right)^2 e^{-x^2}\, dx
\leq \frac{\epsilon}{B^2}.\] Hence
\begin{align*}
  & \int_\Rset \left( f(x) - H_\lambda(x)^2 p(x) \right)^2 W_\lambda(x)\, dx\\
  &\quad = \int_\Rset \left( \frac{f(x)}{H_\lambda(x)} - H_\lambda(x)
    p(x) \right)^2 e^{-x^2}\, dx\\
  &\quad \leq B^2 \int_\Rset \left( \hat{f}(x) - (1+x^{2m})
    p(x) \right)^2 e^{-x^2}\, dx\\
  &\quad \leq \epsilon
\end{align*}
Finally, Proposition~\ref{prop:etasquared} ensures that the polynomial
$H_\lambda(x)^2 p(x)$ belongs to $\cU_\lambda$, which establishes the claim.
\end{proof}

\section*{Acknowledgements}
We would like to thank Antonio Dur\'an for bringing this observation to our attention and taking part in rigorous discussions on the matter. We would like to thank Paul Nevai for motivating this exchange with Antonio Dur\'an and  giving us the opportunity to prepare this amended proof.
The research of DGU has been supported in part by Spanish MINECO-FEDER Grants
MTM2015-65888-C4-3 and PGC2018-096504-B-C33. The research of RM was supported in part by NSERC grant RGPIN-228057-2009.

\end{document}